\theoremstyle{plain}
	\newtheorem{Theo}{Theorem}[section]
	\newtheorem{Prop}[Theo]{Proposition}
	\newtheorem{Lemm}[Theo]{Lemma}
	\newtheorem{TheoPrinc}{Theorem}
\theoremstyle{definition}
	\newtheorem{Defi}[Theo]{Definition}
\theoremstyle{remark}
\def\emptyset{\varnothing}
\def\ZZd{\mathbb{Z}^d}
\def\RR{{\mathbb R}}    
\def\RRd{\mathbb{R}^d}
\DeclareMathOperator{\defin}{def} 
\DeclareMathOperator{\per}{per} 
\DeclareMathOperator{\sta}{sta} 
\def\F{\mathcal{F}} 
\def\Ln{\Lambda_n} 
\def\g{\omega} 
\def\gD{\omega_{\Delta}} 
\def\gDc{\omega_{\Delta^c}} 
\def\gDp{\omega'_{\Delta}} 
\def\guD{\omega_{\tau^{-1}_{u}(\Delta)}} 
\def\guDc{\omega_{\tau^{-1}_{u}(\Delta)^c}} 
\def\guDp{\omega'_{\tau^{-1}_{u}(\Delta)}} 
\def\fD{f_{\Delta}} 
\def\fDlk{f_{\Delta, k}^{l}}
\def\fDlK{f_{\Delta, K}^{l}}
\def\fDK{f_{\Delta, K}}
\def\ZD{Z_{\Delta}(\omega^{}_{\Delta^c})}
\def\ZDlk{Z^{l}_{\Delta, k}}
\def\ZDK{Z_{\Delta, K}}
\def\HD{H_{\Delta}}
\def\HDl{H_{\Delta}^l}
\def\HuD{H_{\tau^{-1}_{u}(\Delta)}}
\def\pizD{\pi_{\Delta}}
\def\pizLn{\pi_{\Lambda_n}}
\def\pizuD{\pi_{\tau^{-1}_{u}(\Delta)}}
\def\indK{\bm{1}_{\vert\gD\vert \leq K}^{}} 
\def\indu{\bm{1}_{\vert\guD\vert \leq K}^{}} 
\def\indpk{\bm{1}_{\vert\gDp\vert \leq k}^{}} 
\def\indpK{\bm{1}_{\vert\gDp\vert \leq K}^{}} 
\def\indpu{\bm{1}_{\vert\guDp\vert \leq K}^{}} 
\def\induD{\bm{1}_{\vert\tu(\g)_{\Delta}\vert \leq K}^{}} 
\def\Pbarn{\bar{P}_n}
\def\leb{\lambda^d} 
\def\CDl{C^l_{\Delta}}
\def\normf{\Vert f \Vert_{\infty}^{}} 
\def\tu{\tau_u} 
\def\t-u{\tau^{-1}_{u}} 
\begin{document}
\title{Existence of Gibbs point processes with stable infinite range interaction}

\author{
David Dereudre
$^{1}$ and Thibaut Vasseur $^{2}$\\
{\normalsize{\em }}
 }

\maketitle

 \footnotetext[1]{\;University of Lille, david.dereudre@univ-lille.fr }
 \footnotetext[2]{\;University of Lille, thibaut.vasseur@univ-lille.fr }

\begin{abstract}

We provide a new proof of the existence of Gibbs point processes with infinite range interactions, based on the compactness of entropy levels. Our main  existence theorem holds under two assumptions. The first one is the standard stability assumption, which means that the energy of any finite configuration is super-linear with respect to the number of points. The second assumption is the so-called  intensity regularity, which controls the long range of the interaction via the intensity of the process. This assumption is new and introduced here since it is well adapted to the entropy approach. As a corollary of our main result we improve the existence results by Ruelle for pairwise interactions \cite{Ruelle70} by relaxing the superstabilty assumption. Note that our setting is not reduced to pairwise interaction and can contain infinite range multi-body counterparts.

    \bigskip

\noindent {\it Key words: DLR equations, entropy bounds, superstable interaction.} 

\end{abstract}

\section{Introduction}

The Gibbs point processes constitute  a large class of point processes with interaction between the points. The interaction can be attractive or repulsive, depending on geometrical features, whereas the null interaction is associated  to the so-called Poisson point process. The existence of such processes in the infinite volume regime has a long history and is initially related to the existence of thermodynamic behaviours in statistical physics. Now the Gibbs point processes are used in several other applied sciences such as material science, astronomy, epidemiology, plant ecology, seismology, telecommunications, and others. Therefore their existence in the infinite volume regime is also relevant for spatial statistics considerations. In the present paper we give a new proof of the existence of such processes for a large class of infinite range interactions. 

The starting point of the theory is an energy function $H$ defined on the space of locally finite configurations in $\RRd$. In the following, $\omega$ denotes such a point configuration and $H(\omega)$ its energy. Then the finite volume Gibbs measure on a bounded set $\Lambda \subset \RRd$ is simply the probability measure
\begin{equation*}
P_\Lambda=\frac{1}{Z_\Lambda} e^{-H}\pi_\Lambda,    
\end{equation*}
where $\pi_\Lambda$ is the Poisson point process in $\Lambda$ with intensity one and $Z_\Lambda$ the normalization constant. The existence of $P_\Lambda$ is guaranteed by the stability condition recalled below. The existence of an infinite volume measure, corresponding to the case ``$\Lambda=\RRd$'', is not obvious and can not be achieved by the definition above. In fact, the general strategy is first to obtain a suitable thermodynamic limit for $(P_\Lambda)$  when $\Lambda$ tends to $\RRd$ and then derive a good description of the limiting point by the so-called DLR equations. The first general result in this direction is due to Ruelle in the seventies  \cite{Ruelle70}. The setting was the  pairwise interaction;
\begin{equation*}
    H(\omega)= \sum_{x\neq y\in\omega} \phi(|x-y|),
\end{equation*}
 where the potential $\phi$ is assumed {\bf Regular}, which means roughly that $\phi$ is summable at infinity (i.e. $\int_R^{+\infty} r^{d-1} |\phi(r)|dr<+\infty$ for $R>0$ large enough) and {\bf Superstable}: there exists a constant $A$ and for any bounded set $\Lambda \subset \RRd$ a constant $B_\Lambda >0$ such that for any finite configuration $\omega$ in $\Lambda$
 \begin{equation*}
  H(\omega) \ge A|\omega|+ B_\Lambda|\omega|^2,   
 \end{equation*}
where $|\omega|$ denotes the number of points in $\omega$.  Under both these assumptions, Ruelle proved the existence of at least one infinite Gibbs point processes. Similar results has been proved more recently in \cite{KKP04,KPR12} using functional analysis tools. In any case the superstability assumption  is required. As a corollary of our main result we improve these existence results in substituting the superstability assumption by the {\bf Stability} assumption: there exists a constant $A$ such that for any finite configuration $\omega$ 
\begin{equation*}
    H(\omega) \ge A|\omega|.
\end{equation*}
 
Let us note that the difference between stable and superstable potential is in fact weak since any stable potential becomes superstable if a pairwise continuous non-negative and non-null at origin potential is added. However there exists several pairwise stable potential which are not superstable. For instance any continuous, non-negative pairwise potential null at origin is stable without being superstable. Let us note also that several examples of stable (and non-superstable) energy functions have introduced recently in stochastic geometry and spatial statistics \cite{BVL,DDG12,MH08}. 

A multi-body interaction occurs when the energy function $H$ is decomposed using potentials on pairs, triplets, quadruplets, k-uplets of points (but not only on pairs). Our results do not use and do not depend on such decompositions. Therefore our existence result covers several multi-body interactions, including  infinite range cases (examples are given in Section \ref{Section_examples}). Let us mention that the finite range multi-body interaction have been treated in \cite{BP02} using the Dobrushin's criterium. 

Our main tool is the compactness of entropy level sets for the local convergence topology. This tool is particularly efficient for proving the tightness  of the sequence of finite volume Gibbs point processes. It has been used for the first time in \cite{GH96} and then a collection of papers followed \cite{dereudre2009existence,DDG12,DH15}. Before the present paper, the entropy strategy have been used only in the setting of finite range or random finite range interaction. As far as we know, it is the first time that it is applied in the setting of pure infinite range interaction. Therefore, our main contribution here was to developed a way to control the decay of the interaction adequately with the entropy approach. It is the reason why we introduce the {\bf Intensity Regular} assumption (see Definition \ref{definition_intReg}). It is called Intensity Regular because  the decay of the interaction is controlled via the intensity of the process. This choice is directly related to the entropy bounds which gives a uniform control of intensities for finite volume Gibbs processes within the thermodynamic limit. In the setting of pairwise interaction, our definition is similar to the Regular assumption by Ruelle.

As mentioned before, in the setting of pairwise interaction, the entropy strategy improves the existence results in relaxing the superstability assumption. However we lose Ruelle's estimates which ensures existence of moments of any order and some exponential and super-exponential moments. The entropy approach only provides moments of order one. Better estimates have to be obtained by different tools.

In the following Section \ref{Section_not}, we introduce the definitions and notations for Gibbs point processes. Our main existence Theorem is also given. Examples of energy functions are presented in Section \ref{Section_examples}. Finally, Section \ref{Section_Proof} is devoted to the proof of the theorem.
\newpage

\section{Notations and results}\label{Section_not}
The real $d$-dimensional space $\RRd$ is  equipped with the usual Euclidean distance $\Vert.\Vert$ and its associated Borel $\sigma$-algebra. Any set $\Lambda\subset \RRd$ is assumed measurable.
\subsection{Finite volume measure}

The space of configurations is the set of locally finite subsets of $\RRd$ 
$$
\Omega = \{\g\subset\RRd : \vert\omega\cap\Delta\vert < \infty,\; \forall \Delta \subset\RRd,\, \text{bounded} \},
$$ 
where $\vert \cdot \vert$ is the cardinal. We denote $\omega \cap \Delta$ by $\gD$, the union $\omega\cup\omega'$ of two configurations by $\omega\omega'$, the space of finite configurations by $\Omega_f$ and the space of configurations in  $\Lambda\subset \RRd$ by $\Omega_{\Lambda}$.

Our space is equipped with the sigma field $\mathcal{F}$ generated by the counting functions $N^{}_{\Delta} : \omega \mapsto \vert \gD \vert$ for all $\Delta \subset \RRd$ bounded. In our setting, a point process is simply a probability measure on $(\Omega, \mathcal{F})$. Note that, with this definition, we identify a point process with its distribution. We say that the process has a finite intensity if for all bounded  subset $\Delta$ the expectation $E_{P}[\vert \gD \vert]$ is finite. If we denote this expectation $\mu(\Delta)$ then $\mu$ is a sigma finite measure on $\RRd$ called the intensity measure. When $\mu = i(P) \leb$, where $\leb$ is the Lebesgue measure on $\RRd$ and $i(P) \geq 0$, we simply say that the point process has intensity $i(P)$. We also introduce 
\begin{equation*}
\xi(P) = \sup_{\substack{\Lambda \subset \RRd \\ 0 < \leb(\Lambda) < +\infty}} \frac{E^P[\vert \g_{\Lambda}^{}\vert]}{\leb(\Lambda)}
\end{equation*}
and we say that a probability measure $P$ has a bounded intensity if $\xi(P) < +\infty$. Obviously, if $P$ has a finite intensity $i(P)$ then $\xi(P) = i(P)$. 

A point process $P$ is stationary if for all $u\in\RRd$, $P = P\circ\t-u$, where $\tau_u$ is the translation of vector $u$. If a stationary point process has a finite intensity then its intensity measure is proportional to the Lebesgue measure and has the form $\mu = i(P) \leb$.

The most popular point processes are the Poisson point processes. We consider here only the homogeneous (or stationary) case where the intensity has the form  $\mu = \zeta \leb$. The process is denoted $\pi^{\zeta}$, or simply $\pi$ if $\zeta = 1$. Recall briefly that $\pi^{\zeta}$ is the only point process in $\RRd$ with intensity $\mu = i(P) \leb$ such that  any two disjoint regions of space are independent under $\pi^{\zeta}$. See the recent book  \cite{last_Penrose} on the subject.

 Let us now define the interaction between the points. We need to introduce an energy function.
\begin{Defi}\label{d.energie}
An energy function is a measurable function $H$ on the space of finite configurations $\Omega_f$ with values in $\RR \cup \{+\infty \}$ such that:
\begin{itemize}
\item[-] $H$ is \emph{non degenerate}: $H(\emptyset) < +\infty$,
\item[-] $H$ is \emph{hereditary}: for all $\omega \in \Omega_f$ and $x\in \omega$ then 
$$
H(\omega) < +\infty \Rightarrow H(\omega\setminus\{ x\}) < +\infty,
$$
\item[-] $H$ is \emph{stationary}: for all $\omega \in \Omega_f$ and $u\in\RRd$
$$ H(\tau_u\omega)=H(\omega).$$
\end{itemize}
\end{Defi}

A crucial assumption is the stability of the energy function.  
\begin{Defi}\label{d.stability} 
An energy function is said to be {\bf stable} if  there exists a constant $A \leq 0$ such that for all $\omega \in \Omega_f$ 
$$
H(\omega) \geq A \vert \g \vert.
$$
\end{Defi}
The assumption {\bf [Stable]}  is standard and have been treated deeply in the literature (see for instance Section 3.2 in \cite{ruelle1999statistical}).

Now we can define the Gibbs point processes in finite volume.
\begin{Defi}\label{d.mesurevolfini} Let $\Lambda$ be a bounded subset in $\RRd$. The Gibbs point process on $\Lambda$ for the stable energy function $H$ is the probability measure on $\Omega_{\Lambda}$ defined by 
$$
P_{\Lambda}(d\omega) = \frac{1}{Z_{\Lambda}}  e^{-H(\omega)} \pi_{\Lambda}(d\omega)
$$
with the  normalization constant $Z_{\Lambda} = \int e^{- H(\omega)} \pi_{\Lambda}(d\omega)$ called the partition function.
\end{Defi}

We can check with the properties of $H$ that $P_{\Lambda}$ is well defined (i.e. $0 < Z_{\Lambda} < +\infty$). In comparison with the standard formalism of Gibbs measures in statistical physics (see \cite{ruelle1999statistical} for instance), the activity and inverse temperature parameters are included in the function $H$ here.

\subsection{Infinite volume measure}

Let us turn now to the definition of Gibbs point processes in the infinite volume regime. We need to introduce the local energy which is given for a finite configuration $\g\in\Omega_f$ by $\HD(\g) = H(\g) - H(\gDc)$. It represents the contribution of energy coming from $\gD$ in $\g$ (the difference of energies with and without $\gD$).  We need to extend this definition for an infinite configuration $\g$. If $(\Delta_l)_{l\geq 0}$ is an increasing sequence of subsets in $\RRd$, we expect that the following limit exists "$\lim_{l\rightarrow +\infty} H^{l}_{\Delta}(\g)$" where 
$$H^{l}_{\Delta}(\g) =  H(\g^{}_{\Delta_l}) - H(\g^{}_{\Delta_l \setminus \Delta}).$$

 In particular, the difference $H^{l+1}_{\Delta}(\g) - H^l_{\Delta}(\g)$ should go to zero as $l$ goes to infinity. Our main assumption is a control of the expectation of this difference for point processes with bounded intensities.

\begin{Defi} \label{definition_intReg} An energy function is said {\bf intensity regular } if for all bounded subset $\Delta$ of $\RRd$, we can find an increasing sequence of subsets $(\Delta_l)_{l\geq 0}$ such that $\Delta \subset \Delta_0$ and 
\begin{equation*}
\left| H^{l+1}_{\Delta}(\g) - H^l_{\Delta}(\g) \right| \leq \vert \gD \vert G_{\Delta}^l(\gDc),
\end{equation*}
where $G^{l}_{\Delta}$ is a non negative function on $\Omega_\Delta^c$ such that for any probability measure $P$ 
\begin{equation*}
E^P\left[G_{\Delta}^l(\gDc) \right] \leq \alpha_l\; \psi(\xi(P)),
\end{equation*}
with $\psi$ an increasing function  and $(\alpha_l)_{l\geq 0}$ a sequence satisfying  $\displaystyle \sum_{l=0}^{+\infty} \alpha_l < +\infty$. 
\end{Defi}

From assumption {\bf [intensity regular]}, we have 
\begin{equation*}
\sum_{l=0}^{+\infty}\left| H_{\Delta}^{l+1}(\g) - H_{\Delta}^l(\g)  \right| \leq \vert \gD \vert \sum_{l=0}^{+\infty} G^l_{\Delta}(\gDc),
\end{equation*}
and we can control the expectation of the second part of the right side 
\begin{align*}
E^P\left[\sum_{l=0}^{+\infty} G^l_{\Delta}(\gDc) \right] \leq   \psi(\xi(P)) \sum_{l=0}^{+\infty} \alpha_l < + \infty.
\end{align*}
Then the local energy is correctly defined for $P$-almost every configurations $\g$ (with $P$ a probability measure with bounded intensity) by
\begin{equation*}
\HD(\g) = H_{\Delta}^{0}(\g) + \sum_{l=0}^{+\infty} \left[H_{\Delta}^{l+1}(\g) - H_{\Delta}^l(\g)  \right].
\end{equation*}
In particular, if $\g$ is a finite configuration in a bounded set $\Lambda$, it comes that 
\begin{equation*}
\HD(\g) = H(\g_{\Lambda}) - H(\g_{\Lambda\setminus\Delta}).
\end{equation*}
In addition, if we introduce the function $C^l_{\Delta}$ on $\Omega_\Delta^c$,by 
\begin{equation*}
C^l_{\Delta}(\gDc) = \sum_{j=l}^{+\infty} G^l_{\Delta}(\gDc),
\end{equation*}
then for all configuration $\omega \in \Omega$ we have the folowing approximation result 
\begin{equation}\label{approx.local.energy}
\left| \HD(\g) - H_{\Delta}^l(\g) \right| \leq \vert \gD \vert C_{\Delta}^l(\gDc)
\end{equation}
with $\displaystyle E^P[C_{\Delta}^l(\gDc)] \leq \psi(\xi(P)) \sum_{j=l}^{\infty}\alpha_j$. 

We are now able to give the definition of infinite volume Gibbs point processes.

\begin{Defi}\label{d.mesurevolinfini}
A probability measure $P$ on $\Omega$ with bounded intensity is  a Gibbs point process for an energy function $H$, satisfying assumptions {\bf [Stable]} and {\bf [Intensity Regular]}, if for all bounded subset $\Delta$ and all bounded measurable function $f$ we have
\begin{equation}\label{DLRequations}
\int f(\g) P(d\g) 
=
\int \int f(\gDp\gDc) \frac{1}{\ZD} e^{- \HD(\gDp\gDc)} \pi_{\Delta}(d\gDp) P(d\g),
\end{equation}
with the normalization constant $\ZD = \int e^{- \HD(\gDp\gDc)} \pi_{\Delta}(d\gDp)$.
\end{Defi}
The equations \eqref{DLRequations} for all $\Delta$ and $f$ are called DLR for Dobrushin-Lanford-Ruelle. To be correctly defined we need to check that $0 < \ZD < +\infty$ for $\g$ sampled from $P$. A lower bound can easily be obtained with
\begin{equation*}
\ZD \geq e^{-\HD(\gDc)}\pi_{\Delta}(\emptyset) = e^{-\leb(\Delta)} > 0.
\end{equation*}
To have an upper bound, we use  assumptions {\bf [Stable]} and {\bf [Intensity Regular]}. From inequality \eqref{approx.local.energy}
\begin{align*}
\HD(\g) & \geq H_{\Delta}^{0}(\g) - \vert \gD \vert C_{\Delta}^{0}(\gDc) \\
& = H(\g_{\Delta_0}) - H(\g_{\Delta_0 \setminus \Delta}) - \vert \gD \vert C_{\Delta}^{0}(\gDc) \\
& \geq (A - C_{\Delta}^{0}(\gDc))  \vert  \gD \vert + A \vert \g_{\Delta_0 \setminus \Delta}\vert - H(\g_{\Delta_0 \setminus \Delta}).
\end{align*}
Then we obtain
\begin{align*}
&\ZD \\& \leq \exp\left(-A \vert \g_{\Delta_0 \setminus \Delta}\vert + H(\g_{\Delta_0 \setminus \Delta})\right)\int e^{ (C_{\Delta}^{0}(\gDc) - A) \vert \gDp \vert} \pi_{\Delta}(d\gDp) \\
& = \exp\left(- A \vert \g_{\Delta_0 \setminus \Delta}\vert + H(\g_{\Delta_0 \setminus \Delta})\right) \sum_{n=0}^{+\infty} \frac{\left(\leb(\Delta)e^{ (C_{\Delta}^{0}(\gDc) - A))}\right)^n}{n!} e^{-\leb(\Delta)} \\
& = \exp\left(- A \vert \g_{\Delta_0 \setminus \Delta}\vert + H(\g_{\Delta_0 \setminus \Delta}) +\leb(\Delta)\left(e^{ (C_{\Delta}^{0}(\gDc) - A))} - 1\right)\right) \\
& < +\infty.
\end{align*}

Our main result is the following theorem which is proved in Section \ref{Section_Proof}.

\begin{TheoPrinc}\label{t.DLR} 
For any energy function $H$ satisfying assumptions {\bf [Stable]} and {\bf [Intensity Regular]}, there exists at least one stationary Gibbs point process with finite intensity.
\end{TheoPrinc}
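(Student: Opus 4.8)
The plan is to use the \emph{specific entropy} method: construct a sequence of stationary finite-volume Gibbs measures, extract an accumulation point via the compactness of entropy level sets for the local convergence topology, and then verify that this limit satisfies the DLR equations \eqref{DLRequations} by a double truncation argument whose range part is controlled precisely by {\bf [Intensity Regular]}. Concretely, on an increasing sequence of boxes $\Ln$ I would take the finite-volume Gibbs measures of Definition \ref{d.mesurevolfini}, periodized on the torus of size $\Ln$ so that translation invariance is preserved, and then embed/average them into stationary measures $\Pbarn$ on $(\Omega,\F)$. The crucial estimate is a \emph{uniform} bound $\sup_n \xi(\Pbarn) < +\infty$, which I would extract from {\bf [Stable]} through the variational structure of $\Pbarn$: on one hand the specific entropy $\mathcal I(\Pbarn\mid\pi) = -E[H]/\leb(\Ln) - \log Z_{\Ln}/\leb(\Ln)$ is controlled using $H(\g)\ge A|\g|$ together with the lower bound $Z_{\Ln}\ge e^{-H(\emptyset)-\leb(\Ln)}$; on the other hand the entropy inequality applied to the rescaled counting function, $E^P[tN_\Delta] \le \mathcal I_\Delta(P\mid\pi) + (e^t-1)\leb(\Delta)$, bounds the intensity in terms of the specific entropy. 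Combining the two with a suitably large parameter $t$ (needed because $A$ may be large in absolute value) closes the loop, gives a linear-in-volume entropy bound, and hence the uniform intensity bound.

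Since the set of stationary probability measures with specific entropy bounded by a fixed constant is sequentially compact for the local convergence topology, I would then extract a subsequence $\Pbarn \to P$ with $P$ stationary. The uniform bound above prevents mass from escaping at infinity: writing $N_\Delta$ as the increasing limit of the bounded local functions $N_\Delta\wedge K$ and using lower semicontinuity, one gets $E^P[N_\Delta] \le \liminf_n E^{\Pbarn}[N_\Delta] < +\infty$, so $P$ has finite (indeed bounded) intensity, and being stationary its intensity measure is $i(P)\leb$.

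The main obstacle is to show that $P$ satisfies \eqref{DLRequations}. Fix a bounded $\Delta$ and a bounded local $f$. Each $\Pbarn$ satisfies the finite-volume DLR identity for the periodized specification, and I want to pass to the limit. The difficulty is that neither $\HD$ nor $\ZD$ is a local or bounded function, so the local topology does not apply directly. I would therefore introduce the two truncations already reflected in the notation of the paper: replace $\HD$ by the finite-range approximation $\HDl$, which depends on $\g$ only through $\g_{\Delta_l}$ and is thus local, and insert the cutoff $\indK$ on $|\gD|$ to make the integrand bounded, producing quantities such as $\fDlK$ and partition functions $\ZDlK$. For fixed $l$ and $K$ these integrands are bounded and local, so $\Pbarn\to P$ yields convergence of both sides of the truncated identity (one also checks that the periodized energy converges to $H$ on bounded sets as $n\to\infty$, which holds at fixed $(l,K)$).

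It then remains to let $K\to\infty$ and $l\to\infty$. The point cutoff is removed using the uniform intensity bound, which makes the contribution of $\{|\gD|>K\}$ uniformly (in $n$) small, i.e. gives uniform integrability of $|\gD|$. The range truncation is removed by {\bf [Intensity Regular]}: the approximation estimate \eqref{approx.local.energy}, namely $|\HD(\g)-\HDl(\g)| \le |\gD|\,\CDl(\gDc)$ with $E^P[\CDl(\gDc)]\le \psi(\xi(P))\sum_{j\ge l}\alpha_j \to 0$, holds \emph{uniformly} along the sequence thanks to the uniform control of $\xi$. Estimating $|e^{-\HD}-e^{-\HDl}|$ through this bound (via $|e^{-a}-e^{-b}|\le|a-b|(e^{-a}+e^{-b})$, using {\bf [Stable]} to dominate the exponentials together with the moment bounds) shows that both the numerator and the partition function converge as $l\to\infty$, uniformly enough to interchange the limits in $n$, $K$ and $l$. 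Reassembling these convergences recovers \eqref{DLRequations} for $P$ and proves the theorem. The whole difficulty is concentrated in this interchange of limits, and it is exactly the intensity-regularity estimate that furnishes the uniform smallness of the range-truncation error needed to carry it out.
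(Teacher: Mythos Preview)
Your proposal is correct and follows essentially the same entropy-compactness strategy as the paper, including the same double truncation $(l,K)$ and the same use of {\bf [Intensity Regular]} via \eqref{approx.local.energy} to control the range error uniformly along the sequence. The only noteworthy technical deviation is that the paper, after obtaining tightness for the periodized-stationarized sequence $P_n^{\sta}$, switches to the non-periodized empirical average $\Pbarn=\leb(\Ln)^{-1}\int_{\Ln}P_n\circ\t-u\,du$ (which shares the same accumulation point $P$) so that the finite-volume DLR identity in Step~5 holds with the genuine energy $H$ rather than a periodized one; your alternative of checking directly that the periodized local energy agrees with $\HDl$ once $\Delta_l\subset\Ln$ is equally valid.
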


\section{Examples}\label{Section_examples}

Let us give examples of energy functions satisfying assumptions {\bf [Stable]} and {\bf [Intensity Regular]} of Theorem  \ref{t.DLR}. Since the assumption {\bf [Stable]} is studied deeply in the literature, we focus mainly on interesting examples satisfying assumption {\bf [Intensity Regular]}.

\paragraph{Finite range interaction.} An energy function $H$ has a finite range if there exists $R>0$ such that for all finite configurations and subset $\Delta$ bounded $\omega \in \Omega_f$, $H_{\Delta}(\omega) = H_{\Delta}\left(\omega_{\Delta\oplus B(0,R)}\right)$. It is easy to see that a finite range energy function verifies assumption  {\bf [Intensity Regular]}, with any increasing sequence of subsets $(\Delta_l)_{l\geq 0}$ such that $\Delta_0 = \Delta\oplus B(0,R)$. Therefore in this setting of finite range interaction, only the assumption {\bf [Stable]} is required to ensure the existence of Gibbs point processes. This result has been proved previously in \cite{DDG12} (See also \cite{dereudre2017introduction} for a simpler and pedagogical proof).

\paragraph{Pairwise interaction.} An energy function  $H$ is pairwise if there exists a symmetric function $\Phi : \RRd \mapsto \RR \cup\{\infty\}$, called a potential, such that  
\begin{equation*}
H(\omega)= \sum_{\{x,y\} \subset \omega} \Phi(x - y).
\end{equation*}
We assume that the potential is isotropic (i.e. $\Phi(x - y) = \phi(\Vert x - y\Vert)$ with $\phi : \RR_+ \mapsto \RR \cup\{\infty\}$). We do not assume the finite range property and so the support of $\Phi$ can be unbounded.

If there exists an integer $L_{\phi}\ge 0$ such that
\begin{equation}\label{h.longue.range}
    \displaystyle \sum_{l=L_{\phi}}^{\infty} l^{d-1} \sup_{r\in [l,l+1]} \vert \phi(r)\vert < \infty
\end{equation}
then the energy function $H$ satisfies the assumption {\bf [Intensity Regular]}. Indeed, let $\Delta$ be a bounded subset of $\RRd$, if we define $\Delta_l = \Delta \oplus B(0,L_{\phi}+l)$, we have 
\begin{equation*}
    H^l_{\Delta}(\omega) = \sum_{\substack{\{x,y\}\subset\omega\cap\Delta_l\\ \{x,y\}\cap\Delta \neq \emptyset}} \phi(\Vert x- y \Vert)
\end{equation*}
and then 
\begin{equation*}
    H^{l+1}_{\Delta}(\omega) - H^l_{\Delta}(\omega) = \sum_{x\in\Delta} \sum_{y\in\Delta_{l+1}\setminus\Delta_l} \phi(\Vert x-y \Vert).
\end{equation*}
By definition, if $x\in\Delta$ and $y\in\Delta_{l+1}\setminus\Delta_l$ then $\Vert x - y \Vert \in [l,l+1+\delta]$ with $\delta = \text{diam}(\Delta)$. Denoting
\begin{equation*}
    G^l_{\Delta}(\gDc) = \vert \omega_{\Delta_{l+1}\setminus\Delta_l} \vert \sup_{r\in[l,l+1+\delta]}\vert \phi(r)\vert,
\end{equation*}
we have 
$\vert H^{l+1}_{\Delta}(\omega) - H^l_{\Delta}(\omega) \vert \leq \vert \gD \vert G^l_{\Delta}(\gDc)$
with $E_P\left[G^l_{\Delta}(\gDc)\right] \leq \alpha_l \xi(P)$
where
\begin{equation*}\label{h.longue.range}
    \alpha_l = c_d\, l^{d-1} \sup_{r\in[l,l+1+\delta]} \vert \phi(r)\vert 
\end{equation*}
is such that $\sum \alpha_l < \infty$.

Note that any combination of such a pairwise energy function and any finite range energy  function also verifies assumption {\bf [Intensity Regular]}  (with $\Delta_l = \Delta \oplus B(0,R\lor L_{\phi} + l)$).

The assumption {\bf [Stable]} is more delicate and has long been investigated, we refer to \cite{ruelle1999statistical} for several results. 

\paragraph{Cloud interaction}

In this last example, we provide an energy function which is  infinite range and not reducible, at any scale, to a pairwise interaction. It is a multibody  interaction between a germ-grain interaction (see for instance the Quermass model \cite{dereudre2009existence} or the Widom-Rowlinson interaction \cite{WR}) and a pairwise interaction. We call it cloud interaction because each point of the configuration is diluted in a cloud around itself and the pairwise interaction is integrated on this cloud.  Precisely for any finite configuration $\omega$

\begin{equation*}
    H(\omega) = \sum_{x\in\omega}\int_{L^R(\omega)} \phi(\Vert x - y \Vert) dy
\end{equation*}
where  $\displaystyle L^R(\omega) = \bigcup_{x\in\omega} B(x,R)$ is the cloud produced by the configuration $\omega$ ($R>0$ is a fixed parameter). This energy function can be viewed as an approximation of the pairwise interaction introduced above. Indeed $H(\omega)/R^d$ tends to the pairwise interaction function (times a multiplicative constant) when $R$ goes to zero.

We suppose that the potential $\phi$ satisfies $\int r^{d-1} \vert\phi(r)\vert dr < +\infty$. But we need a slightly stronger assumption such as (\ref{h.longue.range}). So, in order to simplify, we assume that the potential is monotonic at large distances.

The energy function satisfies clearly assumption {\bf [Stable]} since
\begin{equation*}
|H(\omega)| \le  |\omega| \int r^{d-1} \vert\phi(r)\vert dr.
\end{equation*}
Note that $H$ is not superstable.

Involving assumption {\bf [Intensity Regular]}, the local energy is given by
\begin{align*}
    H^l_{\Delta}(\omega) & = H(\omega_{\Delta_l}) - H(\omega_{\Delta_l\setminus\Delta}) \\
    & = \sum_{x\in\omega_{\Delta_l}}\int_{L^R(\omega_{\Delta_l})} \phi(\Vert x - y \Vert) dy - \sum_{x\in\omega_{\Delta_l\setminus\Delta}}\int_{L^R(\omega_{\Delta_l\setminus\Delta})} \phi(\Vert x - y \Vert) dy \\
    & = \sum_{x\in\omega_{\Delta_l\setminus\Delta}}\int_{L^R(\omega_{\Delta_l})\setminus L^R(\omega_{\Delta_l\setminus\Delta})} \phi(\Vert x - y \Vert) d y + \sum_{x\in\gDc} \int_{L^R(\omega_{\Delta_l})} \phi(\Vert x - y \Vert) d y,
\end{align*}
For the following, we choose $\Delta_0=\Delta \oplus B(0,2R)$, which implies that 
\begin{equation*}
    L^R(\omega_{\Delta_l})\setminus L^R(\omega_{\Delta_l\setminus\Delta})
    = L^R(\omega_{\Delta_0})\setminus L^R(\omega_{\Delta_0\setminus\Delta}) 
    \overset{\defin}{=} L^R_{\Delta}(\omega).
\end{equation*}
Using this notation we have 
\begin{equation*}
    H^l_{\Delta}(\omega)  = \sum_{x\in\omega_{\Delta_l\setminus\Delta}}\int_{L^R_{\Delta}(\omega)} \phi(\Vert x - y \Vert) d y + \sum_{x\in\gDc} \int_{L^R(\omega_{\Delta_l})} \phi(\Vert x - y \Vert) d y.
\end{equation*}
The first term corresponds to the interaction of the points outside of $\Delta$ with the cloud created by the points in $\Delta$ and the second corresponds to the interaction of the points in $\Delta$ with the full cloud.
We can compute the cost of adding a shell 
\begin{multline*}
    H^{l+1}_{\Delta}(\omega) - H^l_{\Delta}(\omega) = \sum_{x\in\omega_{\Delta_{l+1}}\setminus\omega_{\Delta_l}} \int_{L^R_{\Delta}(\omega)} \phi(\Vert x-y\Vert) dy \\ + \sum_{x\in\gDc}\int_{L^R(\omega_{\Delta_{l+1}})\setminus L^R(\omega_{\Delta_l})} \phi(\Vert x -y\Vert)dy.
\end{multline*}
If we choose $\Delta_l = \Delta_0 \oplus B(0,l) = \Delta \oplus B(0,2R+l)$, for $x\in\omega_{\Delta_{l+1}}\setminus\omega_{\Delta_l}$ and $y \in L^R_{\Delta}(\omega)$ or for $x\in\gDc$ and $y\in L^R(\omega_{\Delta_{l+1}})\setminus L^R(\omega_{\Delta_l})$ we have $\Vert x - y \Vert \in I_l$ where $I_l = [l+R,l+1+3R+\text{diam}(\Delta)]$, then we obtain the upper bound 
\begin{equation*}
    \vert H^{l+1}_{\Delta}(\omega) - H^l_{\Delta}(\omega)   \vert \leq \left(\vert \omega_{\Delta_{l+1}\setminus\Delta_{l}} \vert \leb(L^R_{\Delta}(\omega)) + \vert\gDc\vert \leb(L^R(\omega_{\Delta_{l+1}\setminus\Delta_l})) \right)\sup_{r\in I_l } \vert \phi(r) \vert.
\end{equation*}
The energy satisfies assumption {\bf [Intensity Regular]} with 
\begin{equation*}
    G_{\Delta}^l(\gDc) = \left(\vert \omega_{\Delta_{l+1}\setminus\Delta_{l}} \vert \leb(\Delta\oplus B(0,R)) +  \leb(\Delta_{l+1}\setminus\Delta_l\oplus B(0,R)) \right)\sup_{r\in I_l } \vert \phi(r) \vert.
\end{equation*}

\section{Proof of the theorem}\label{Section_Proof}

\subsection{Construction of an infinite volume measure}

The first step of the proof is to build an accumulation point of a sequence of finite volume Gibbs measures. Using entropy bounds and the stability of the energy, we prove the existence of such an accumulation point for the local convergence topology. This strategy and its tools have been used several time in the literature (\cite{dereudre2009existence}, \cite{dereudre2017introduction}, \cite{DDG12}, \cite{DH15}) and we recall here only the main ideas.

For $n$ a positive integer, we denote $\Ln= ]-n,n]^d$. We consider the sequence of Gibbs measures in finite volume given by 
$$
P_n(d\g) = P_{\Ln}(d\g) = \frac{1}{Z_n} e^{- H(\g)} \pizLn(d\g),
$$
with the normalization constant $Z_n = \int e^{- H(\g)} \pizLn(d\g)$. Since our tension tool will be defined for stationary measures, we need to modify $(P_n)_{n\geq1}$. 
We defined the periodized version $P_n^{\per}$ by the probability measure $\underset{u\in\ZZd}{\bigotimes} P_n \circ \tau^{-1}_{2nu}$, and the stationnarized version by 
\begin{equation*}
P^{\sta}_n = \frac{1}{\leb(\Ln)}\int_{\Ln} P_n^{\per} \circ \t-u d u. 
\end{equation*}

\begin{Defi}
A function $f$ is said local if there exists a bounded set $\Delta$ such that $f$ is $\F_{\Delta}$ measurable (ie for all configuration $\g$ in $\Omega$, we have $f(\g) = f(\gD)$). A sequence of measures $(\mu_n)$ converges to $\mu$ for the local convergence topology if for all bounded local functions $f$ 
\begin{equation*}
\int f d\mu_n \underset{n\rightarrow +\infty}{\rightarrow} \int f d\mu.
\end{equation*}
\end{Defi}

Given two probabilities measures  $\mu$ and $\nu$ on $\Omega$, we recall that the relative entropy of $\mu$ with respect to $\nu$ on $\Lambda$ is defined as 
$$
I_{\Lambda}(\mu | \nu) = \left\{
    \begin{array}{ll}
        \int \log f d\mu_{\lambda} & \mbox{ if } \mu_{\Lambda}  \ll \nu_{\Lambda} \mbox{ and } f = \frac{d\mu_{\Lambda}}{d\nu_{\Lambda}}   \\
        +\infty & \mbox{otherwise.}
    \end{array}
\right.
$$ 
\begin{Defi}\label{d.specific.entropy}
Let $\mu$ be a stationary probability measure with finite intensity on $\Omega$. For $\zeta > 0$ the specific entropy of $\mu$ with respect to $\pi^{\zeta}$ is defined by
\begin{equation}\label{formule.specific.entropy}
I_{\zeta}(\mu | \pi^{\zeta}) =  \lim_{n\rightarrow +\infty} \frac{I_{\Ln}(\mu\, | \pi^{\zeta})}{\leb(\Ln)} 
=
\sup_{\substack{\Lambda \subset \RRd \\ 0 < \leb(\Lambda) < +\infty}} \frac{I_\Lambda(\mu|\pi^\zeta)}{\leb(\Lambda)}.
\end{equation}
\end{Defi}
For details we refer to Chapter 15 of \cite{georgii2011gibbs}. The next result, stated in \cite{georgii1993large}, is our tension tool.

%
%
%

\begin{Prop}\label{p.tension}
For any $\zeta > 0$ and $c > 0$, the set of probability measures 
$$
\{ \mu \mbox{ stationary with finite intensity, } I_{\zeta}(\mu) \leq c\}
$$
is compact and sequentially compact for the local convergence topology. 
\end{Prop}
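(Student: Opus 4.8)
The plan is to deduce everything from the finite-volume entropy bounds that the specific-entropy constraint produces, working at the level of Poisson densities so that the local convergence topology is respected. Fix $\zeta>0$ and $c>0$ and write $\mathcal{K}$ for the level set in question. By the supremum characterization in \eqref{formule.specific.entropy}, every $\mu\in\mathcal{K}$ satisfies $I_\Lambda(\mu|\pi^\zeta)\le c\,\leb(\Lambda)<+\infty$ for each bounded $\Lambda\subset\RRd$. In particular $\mu_\Lambda\ll\pi^\zeta_\Lambda$ and its density $f^\mu_\Lambda=d\mu_\Lambda/d\pi^\zeta_\Lambda$ obeys $\int f^\mu_\Lambda\log f^\mu_\Lambda\,d\pi^\zeta\le c\,\leb(\Lambda)$.

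For the sequential relative compactness I would fix an increasing sequence of bounded boxes $(\Delta_m)_{m\ge1}$ with $\bigcup_m\Delta_m=\RRd$. On each $\Delta_m$ the uniform bound $\int f^\mu_{\Delta_m}\log f^\mu_{\Delta_m}\,d\pi^\zeta\le c\,\leb(\Delta_m)$ yields, through the de la Vall\'ee--Poussin criterion (with the superlinear function $t\mapsto t\log t$), that $\{\, f^\mu_{\Delta_m}:\mu\in\mathcal{K}\,\}$ is uniformly integrable, hence relatively weakly compact in $L^1(\pi^\zeta_{\Delta_m})$ by the Dunford--Pettis theorem. Given a sequence $(\mu_n)$ in $\mathcal{K}$, a diagonal extraction over $m$ produces a subsequence $(\mu_{n_k})$ whose density on each $\Delta_m$ converges weakly in $L^1(\pi^\zeta_{\Delta_m})$ to some $f_m\ge0$. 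Testing against bounded $\F_{\Delta_m}$-measurable functions shows the limits are consistent under restriction and of total mass one, so they are the local densities of a single probability measure $\mu$ on $(\Omega,\F)$. Since any bounded local function is $\F_{\Delta_m}$-measurable for some $m$ and belongs to $L^\infty(\pi^\zeta_{\Delta_m})$, weak $L^1$-convergence gives $\int f\,d\mu_{n_k}\to\int f\,d\mu$; thus $\mu_{n_k}\to\mu$ in the local convergence topology.

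It remains to check that $\mu\in\mathcal{K}$. Lower semicontinuity of the specific entropy follows from the Donsker--Varadhan variational formula
\[
I_\Lambda(\nu|\pi^\zeta)=\sup_{g}\left(\int g\,d\nu-\log\int e^{g}\,d\pi^\zeta\right),
\]
where $g$ ranges over bounded $\F_\Lambda$-measurable functions: each $\nu\mapsto\int g\,d\nu-\log\int e^g\,d\pi^\zeta$ is continuous for local convergence, so $\nu\mapsto I_\Lambda(\nu|\pi^\zeta)$ and hence $I_\zeta(\cdot)=\sup_\Lambda I_\Lambda(\cdot|\pi^\zeta)/\leb(\Lambda)$ are lower semicontinuous. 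Therefore $I_\zeta(\mu)\le\liminf_k I_\zeta(\mu_{n_k})\le c$. Stationarity passes to the limit since $\int f\circ\tau_u\,d\mu=\lim_k\int f\circ\tau_u\,d\mu_{n_k}=\lim_k\int f\,d\mu_{n_k}=\int f\,d\mu$ for all bounded local $f$ and $u\in\RRd$, while applying the entropy inequality to $g=t\,N_\Delta$ bounds $E^\mu[N_\Delta]$ and in fact $\xi(\mu)$ uniformly, giving finite intensity. Finally, identifying $\mathcal{K}$ with a subset of $\prod_m L^1(\pi^\zeta_{\Delta_m})$ through the density maps, it becomes a closed subset (by the lower semicontinuity and the consistency constraints) of the product of the weakly compact sets $\{\, f\ge0:\int f\,d\pi^\zeta=1,\ \int f\log f\,d\pi^\zeta\le c\,\leb(\Delta_m)\,\}$, hence compact by Tychonoff.

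I expect the main obstacle to be the relative compactness step, where the single specific-entropy bound must be converted into genuine compactness while honoring the local convergence topology, which is strictly finer than weak convergence of point processes. The resolution is to argue with the Poisson densities rather than the measures themselves: there the entropy bound is exactly the uniform integrability hypothesis of Dunford--Pettis, and the entropy inequality controls the point counts so that no mass escapes and the weak $L^1$-limit is again a probability measure.
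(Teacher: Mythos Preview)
The paper does not prove Proposition~\ref{p.tension}; it is stated with a reference to \cite{georgii1993large} and then used as a black box. So there is no in-paper argument to compare your proposal against.

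Your sketch follows the standard route behind the cited result: the specific-entropy bound yields, via the supremum expression \eqref{formule.specific.entropy}, uniform finite-volume relative entropies; these give uniform integrability of the Poisson densities on each window and hence weak-$L^1$ relative compactness by Dunford--Pettis; a diagonal extraction produces a consistent family of limit densities defining the limit measure; lower semicontinuity of $I_\zeta$ through the Donsker--Varadhan variational formula closes the level set; and stationarity and the intensity bound pass to the limit. This is essentially Georgii's argument, so your proposal is correct in substance.

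Two small points would need tightening in a full write-up. First, $t\mapsto t\log t$ is negative near zero, so for the de~la~Vall\'ee--Poussin criterion one should use $t\log^+ t$ (whose integral is still uniformly bounded since $\int f\log^- f\,d\pi^\zeta_\Lambda\le e^{-1}$). Second, the Tychonoff step at the end presupposes that the local convergence topology on $\mathcal{K}$ coincides with the relative topology inherited from the product of the weak-$L^1$ topologies on the window densities; this holds because bounded local functions are exactly the test functions for both, and on each window the weak-$L^1$ topology restricted to a uniformly integrable set is metrizable, but it deserves a sentence. Neither point is a genuine gap.
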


In order to apply this proposition in our case, we need to compute the specific entropy of the probability measure $P^{\sta}_n$. Using the affine property of the specific entropy, it is well known that  

$$I_{\zeta}(P^{\sta}_n)= \frac{1}{\leb(\Ln)} I_{\Ln}(P_n | \pi^{\zeta}).$$

%
What remains is to compute the relative entropy of the Gibbs measure $P_n$ with respect to the Poisson point process $\pi^{\zeta}$ on $\Ln$;
\begin{align*}
I_{\Ln}(P_n | \pi^{\zeta}) 
& = \int \log\left(\frac{dP_n}{d\pi^{\zeta}}\right) dP_n \\
& = \int \left[\log\left(\frac{dP_n}{d\pi^z}\right) + \log\left(\frac{d\pi}{d\pi^{\zeta}}\right)\right] dP_n\\
& = \int \left[-\log(Z_n) -  H(\g) + \log\left(e^{(\zeta - 1)\leb(\Ln)}\left(\frac{1}{\zeta}\right)^{\vert \g \vert}\right)\right] P_n(d\g) \\
&= (\zeta - 1) \leb(\Ln) - \log(Z_n) + \int\left[ - H(\g) - \log(\zeta) \vert \g \vert \right] P_n(d\g).
\end{align*}
The normalization constant can easily be bounded from below
\begin{equation*}
Z_n = \int e^{- H} d\pi \geq e^{- H(\emptyset)} e^{-\leb(\Ln)}.
\end{equation*}
Then, using the stability of $H$, we have 
\begin{equation*}
I_{\Ln}(P_n | \pi^{\zeta})  \leq \zeta \leb(\Ln) +  H(\emptyset) + \int (- A - \log(\zeta) )\vert \g \vert P_n(d\g). 
\end{equation*}
If $\zeta$ is such that $- A - \log(\zeta) \leq 0$, we have  $I_{\zeta}(P^{\sta}_n) \leq \zeta +  H(\emptyset)$. According to Proposition \ref{p.tension} we can exhibit a sub-sequence of $(P_n^{\sta})_{n\geq 1}$ which converges to a stationary measure $P$ with finite intensity. To simplify the notations, we can suppose that we have changed the indexation of the sequence $(\Ln)_{n\geq 1}$ such that $(P^{\sta}_n)_{n\geq 1}$ converges locally to $P$.

We can prove that $P$ is also an accumulation point of the sequence 
\begin{equation*}
\Pbarn = \frac{1}{\leb(\Ln)} \int_{\Ln} P_n \circ \t-u du.
\end{equation*}
See Lemma 3.5 \cite{dereudre2009existence} for details.

%
%

Let us finish this section by giving the crucial property of uniform control of intensities for the sequence $(\bar P_n)$
\begin{Lemm}\label{l.bounded.intensity}
We can find $\xi \geq i(P)$ such that for all integer $n\geq 1$, $\xi(\Pbarn) \leq \xi$. 
\end{Lemm}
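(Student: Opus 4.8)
The plan is to reduce the uniform intensity control for the stationarized measures $\Pbarn$ to a uniform per-volume bound on the total mass $E^{P_n}[\vert\g\vert]$, and then to extract the latter from the entropy estimate already obtained just above the statement.

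First I would relate $\xi(\Pbarn)$ to the expected number of points under $P_n$. Since $P_n$ is carried by $\Omega_{\Ln}$, a direct computation from the definition of $\Pbarn$ together with Fubini gives, for any bounded $\Lambda$,
\begin{equation*}
E^{\Pbarn}[\vert\g_{\Lambda}\vert] = \frac{1}{\leb(\Ln)}\, E^{P_n}\!\left[\sum_{x\in\g}\leb\big(\Ln\cap(\Lambda-x)\big)\right] \le \frac{\leb(\Lambda)}{\leb(\Ln)}\, E^{P_n}[\vert\g\vert],
\end{equation*}
where the inequality uses $\leb(\Ln\cap(\Lambda-x))\le\leb(\Lambda)$ for every $x$. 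Dividing by $\leb(\Lambda)$ and taking the supremum over $\Lambda$ yields
\begin{equation*}
\xi(\Pbarn)\le \frac{E^{P_n}[\vert\g\vert]}{\leb(\Ln)},
\end{equation*}
so it suffices to bound the right-hand side uniformly in $n$.

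Next I would exploit the entropy bound. Choosing the reference intensity $\zeta\ge e^{-A}$ so that $-A-\log\zeta\le0$, the term $\int(-A-\log\zeta)\vert\g\vert\,dP_n$ is nonpositive and may be dropped, leaving the clean estimate $I_{\Ln}(P_n\mid\pi^{\zeta})\le \zeta\leb(\Ln)+H(\emptyset)$, with no occurrence of $E^{P_n}[\vert\g\vert]$. I then feed this into the entropy (Gibbs variational) inequality $\int g\,dP_n\le\log\int e^{g}\,d\pi^{\zeta}+I_{\Ln}(P_n\mid\pi^{\zeta})$, valid since $P_n\ll\pi^{\zeta}$ on $\Ln$ with finite entropy, applied to $g=\vert\g\vert$. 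As $\vert\g\vert$ is Poisson of parameter $\zeta\leb(\Ln)$ under $\pi^{\zeta}$, one has $\log\int e^{\vert\g\vert}\,d\pi^{\zeta}=\zeta\leb(\Ln)(e-1)$, hence
\begin{equation*}
E^{P_n}[\vert\g\vert] \le \zeta\leb(\Ln)(e-1) + \zeta\leb(\Ln) + H(\emptyset) = \zeta e\,\leb(\Ln) + H(\emptyset).
\end{equation*}
Dividing by $\leb(\Ln)\ge\leb(\Lambda_1)>0$ gives $E^{P_n}[\vert\g\vert]/\leb(\Ln)\le \zeta e + \max(H(\emptyset),0)/\leb(\Lambda_1)=:\xi$, independent of $n$; combined with the first step, $\xi(\Pbarn)\le\xi$ for every $n\ge1$.

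Finally I would transfer the bound to $P$. Since $P$ is an accumulation point of $(\Pbarn)$ for the local topology, I pick a subsequence with $\bar P_{n_k}\to P$ locally; for fixed bounded $\Lambda$ and $M>0$ the truncation $\min(\vert\g_{\Lambda}\vert,M)$ is bounded and local, so $\int\min(\vert\g_{\Lambda}\vert,M)\,dP=\lim_k\int\min(\vert\g_{\Lambda}\vert,M)\,d\bar P_{n_k}\le\xi\leb(\Lambda)$. Letting $M\to\infty$ and using monotone convergence gives $E^{P}[\vert\g_{\Lambda}\vert]\le\xi\leb(\Lambda)$, whence $i(P)=\xi(P)\le\xi$. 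The only genuinely delicate point is the threat of circularity in the second step: the entropy estimate still contains $E^{P_n}[\vert\g\vert]$, so a naive substitution is useless; the resolution is precisely the sign condition $\zeta\ge e^{-A}$, which removes that term and decouples the two quantities. A minor additional care is that $\vert\g_{\Lambda}\vert$ is unbounded, which is why the last limit must go through truncation and monotone convergence rather than a direct application of local convergence.
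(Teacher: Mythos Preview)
Your proof is correct and follows essentially the same strategy as the paper: apply the entropic (Gibbs variational) inequality $\mu(g)\le I(\mu|\nu)+\log\nu(e^g)$ with $g=\vert\omega\vert$, use the Poisson moment generating function $\log E^{\pi^{\zeta}}[e^{\vert\omega_{\Lambda_n}\vert}]=\zeta\lambda^d(\Lambda_n)(e-1)$, and feed in the already-established entropy bound under the choice $\zeta\ge e^{-A}$. The only cosmetic difference is the measure to which the entropic inequality is applied: the paper works with the stationarized measure $P_n^{\mathrm{sta}}$, bounds $I_{\Lambda_n}(P_n^{\mathrm{sta}}|\pi^{\zeta})$ via the specific entropy supremum formula \eqref{formule.specific.entropy}, obtains $i(P_n^{\mathrm{sta}})\le\zeta e+H(\emptyset)$, and then invokes $\xi(\bar P_n)\le i(P_n^{\mathrm{sta}})$; you instead work directly with $P_n$, compute $\xi(\bar P_n)\le E^{P_n}[\vert\omega\vert]/\lambda^d(\Lambda_n)$ by hand, and bound $I_{\Lambda_n}(P_n|\pi^{\zeta})$ straight from the inequality displayed just above the lemma. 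Your route is slightly more self-contained (it avoids the specific-entropy machinery and the unproved inequality $\xi(\bar P_n)\le i(P_n^{\mathrm{sta}})$), and you also spell out the passage $i(P)\le\xi$ via truncation and local convergence, which the paper leaves implicit.
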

\begin{proof}
We use the entropic inequality $\mu(g) \leq I(\mu|\nu) + \log(\nu(e^g))$ to obtain
\begin{equation*}
E_{P^{\sta}_n}[\vert \omega_{\Ln} \vert] \leq I_{\Ln}(P^{\sta}_n | \pi^{\zeta}) + \log  \left(E_{\pi^{\zeta}_{\Ln}}\left[e^{\vert \omega_{\Ln} \vert}\right]\right).
\end{equation*}
Using the expression of the specific entropy as a supremum (\ref{formule.specific.entropy}), we have
\begin{equation*}
I_{\Ln}(P^{\sta}_n | \pi^{\zeta}) \leq I_{\zeta}(P^{\sta}_n) \leb(\Ln) \leq (\zeta +  H(\emptyset))\leb(\Ln).
\end{equation*}
Under $\pi^{\zeta}_{\Ln}$, the random variable $\vert \omega_{\Ln} \vert$ follows a Poisson law of parameter  $\zeta \leb(\Ln)$ and so
$$
E_{\pi^{\zeta}_{\Ln}}\left[e^{\vert \omega_{\Ln}\vert} \right] = e^{-\zeta \leb(\Ln)}\sum_{p=0}^{\infty}  \frac{(\zeta\leb(\Ln))^p}{p!}e^p = \exp(\zeta \leb(\Ln)(e-1)).
$$
Then we obtain 
$$
i(P^{\sta}_n) \leb(\Ln) = E_{P^{\sta}_n}[\vert \omega_{\Ln} \vert] \leq (\zeta e +  H(\emptyset))\leb(\Ln),
$$
and since $\xi(\bar{P}_n) \leq i(P^{\sta}_n)$, we deduce the lemma.
\end{proof}
\subsection{The DLR equation}

We prove in this section  that the accumulation point $P$ satisfies the DLR equations stated in Definition \ref{d.mesurevolinfini}. By a standard class monotone argument  we can replace the class of bounded measurable functions by the class of bounded local functions. Let $f$ be a bounded local function, $\Delta$ be a bounded measurable subset of $\RRd$, we show $\int f dP = \int \fD dP$ where 
\begin{equation*}
\fD(\omega) =  \int f(\gDp\gDc) \frac{1}{\ZD} e^{- \HD(\gDp\gDc)} \pi_{\Delta}(d\gDp).
\end{equation*}
We fix $\epsilon > 0$.

\paragraph{Step 1.} There exists $K' > 0$ such that for all $k \geq K'$, we have $P(\vert\gD\vert > k) \leq \varepsilon$, 
this implies that for all $k \geq K'$,

\begin{equation}\label{approx1}
\left| \int \fD(\g)P(d\g) - \int \fD(\g)\bm{1}_{\vert\gD\vert\leq k} P(d\g) \right| \leq \normf\varepsilon.
\end{equation}
The introduction of this indicator function will be usefull in step 4.
\paragraph{Step 2} We approach $\fD$ by $\fDlk$ which corresponds to the approximation of the local energy $\HD$ by $\HDl$ and a restriction to configurations having less than $k$ points in $\Delta$, which means

\begin{equation*}
\fDlk(\g) = \frac{1}{\ZDlk(\gDc)}\int f(\gDp\gDc)e^{-\HDl(\gDp\gDc)}\indpk \pizD(d\gDp),
\end{equation*}
with the normalization constant 
$\ZDlk(\gDc) = \int e^{-\HDl(\gDp\gDc)}\indpk \pizD(d\gDp)$.
We prove that we can find $K \geq K'$ and $l$ (depending on $K$) such that :
\begin{equation}\label{approx2}
\left|\int \fD(\g)\indK P(d\g) - \int \fDlK(\g) \indK P(d\g)  \right| \leq 6\normf\varepsilon.
\end{equation}
We must estimate the approximation error 
\begin{align*}
& \fD(\g) - \fDlk(\g) \\
& = \frac{1}{\ZD}\int f(\gDp\gDc) \left(e^{- \HD(\gDp\gDc)} - e^{-\HDl(\gDp\gDc)}\indpk\right) \pizD(d\gDp) \\
& \quad + \left(\frac{1}{\ZD}-\frac{1}{\ZDlk(\gDc)}\right)\int f(\gDp\gDc) e^{- \HDl(\gDp\gDc)}\indpk \pizD(d\gDp).
\end{align*}
Since the difference between the normalization constants is 
\begin{equation*}
\ZDlk(\gDc)-\ZD = \int \left(e^{-\HDl(\gDp\gDc)}\indpk e^{- \HD(\gDp\gDc)}\right) \pizD(d\gDp),
\end{equation*}
we obtain the upper-bound 
\begin{align}\label{e.majoration}
\vert \fD(\g) - \fDlk(\g)\vert \leq {} & \frac{2\normf}{\ZD}\int \left\vert e^{- \HD(\gDp\gDc)} - e^{-\HDl(\gDp\gDc)}  \indpk \right\vert\pizD(d\gDp) \nonumber\\
= {}  & \frac{2\normf}{\ZD}\int \left\vert e^{- \HD(\gDp\gDc)} - e^{-\HDl(\gDp\gDc)} \right\vert \indpk \pizD(d\gDp) \nonumber\\
 & + \frac{2\normf}{\ZD}\int e^{- \HD(\gDp\gDc)} \bm{1}_{\vert\gDp\vert > k}^{} \pizD(d\gDp).
\end{align}
By the dominated convergence theorem, we can find  $K \geq K'$ such that 
\begin{equation*}
\int\frac{1}{\ZD}\int e^{- \HD(\gDp\gDc)} \bm{1}_{\vert\gDp\vert > K}^{} \pizD(d\gDp)P(d\g)
\leq \varepsilon.
\end{equation*}
Once $K$ is chosen, using the inequality $\vert e^b- e^a \vert \leq \vert b -a \vert e^{\vert b - a\vert + a} $ and the approximation (\ref{approx.local.energy}) we obtain the upper-bound
\begin{equation}\label{e.majorationexp}
\left\vert e^{-\HDl(\gDp\gDc)} - e^{- \HD(\gDp\gDc)} \right\vert \indpK
\leq
 K \CDl(\gDc) e^{ K \CDl(\gDc)- \HD(\gDp\gDc)}.
\end{equation}
As $E^P[\CDl(\gDc)]$ goes to zero when $l$ goes to infinity, with Markov's inequality we can choose $l$ (depending to $K$) such that 
\begin{equation*}
P\left( K \CDl(\gDc) e^{ K \CDl(\gDc)} > \varepsilon\right) \leq \varepsilon.
\end{equation*}
According to Lemma \ref{l.bounded.intensity}, the point processes ($\Pbarn)_{n\geq1}$ have uniformly bounded intensities, so $l$ could be such that for all $n$
\begin{equation}\label{probaPn}
\Pbarn\left(K \CDl(\gDc) e^{ K \CDl(\gDc)} > \varepsilon\right) \leq \varepsilon,
\end{equation}
which will be useful later.
With our choice of $K$ and $l$ we have finally the approximation (\ref{approx2}).

\paragraph{Step 3.} For $n$ large enough (depending on $K$ and $l$) we have
\begin{equation}\label{approx3}
\left|\int \fDlK(\g) \bm{1}_{\vert\gD\vert \leq K} P(d\g) - \int \fDlK(\g) \bm{1}_{\vert\gD\vert \leq K} \Pbarn(d\g) \right| \leq \normf\varepsilon.
\end{equation}
It is simply a consequence of the local convergence of the sequence $(\Pbarn)_{n\geq 1}$ to $P$.

\paragraph{Step 4.}For all $n \geq 1$ we show the approximation 
\begin{equation}\label{approx4}
\left| \int \fDlK(\g) \bm{1}_{\vert\gD\vert \leq K} \Pbarn(d\g) - \int \fDK(\g) \bm{1}_{\vert\gD\vert \leq K} \Pbarn(d\g) \right| \leq 4 \normf \varepsilon,
\end{equation}
where  
\begin{equation*}
\fDK(\g) = \frac{1}{\ZDK(\gDc)}\int f(\gDp\gDc)e^{-\HD(\gDp\gDc)}\indpK \pizD(d\gDp),
\end{equation*}
with the normalization constant 
$\ZDK(\gDc) = \int e^{-\HD(\gDp\gDc)}\indpK \pizD(d\gDp)$.

Similarly to the upper-bounds (\ref{e.majoration}) and (\ref{e.majorationexp}) we obtain 
\begin{align*}
\vert \fDlK(\g) - \fDK(\g) \vert \leq {} & \frac{2\normf}{\ZDK(\gDc)}\int \vert e^{-\HD(\gDp\gDc)} - e^{-\HDl(\gDp\gDc)}\vert \bm{1}_{\vert\gDp\vert \leq K} \pizD(d\gDp) \\
\leq {} & 2\normf e^{ K \CDl(\gDc)} K \CDl(\gDc).
\end{align*}
From our previous choice of $K$ and $l$ in estimate (\ref{probaPn}),
we obtain the approximation (\ref{approx4}).

\paragraph{Step 5.} We use the DLR equations for finite volume Gibbs processes to prove that
\begin{equation}\label{approx5}
\left| \int \fDK(\g) \indK \Pbarn(d\g) - \int f(\g) \indK \Pbarn(d\g) \right|
\leq 2 \normf \varepsilon.
\end{equation}
Let us introduce $\Ln^* = \{ u \in \Ln : \t-u(\Delta) \subset \Ln \}$. Note that if $\Delta \subset \Lambda_k$ and $n \geq k$ then $\Lambda_{n-k} \subset \Ln^*$ and $(n-k)^d / n^d \leq \leb(\Ln^*) / \leb(\Ln) \leq 1$. We choose $n$ large enough such that $\leb(\Ln^*) / \leb(\Ln) \geq 1 - \varepsilon$, and if we denote 
\begin{equation*}
\Pbarn^* = \frac{1}{\leb(\Ln)}\int_{\Ln^*} P_n \circ \t-u,
\end{equation*} 
we have the approximation 
\begin{equation*}
\left| \int \fDK(\g) \bm{1}_{\vert\gD\vert \leq K} \Pbarn(d\g)  - \int \fDK(\g) \bm{1}_{\vert\gD\vert \leq K} \Pbarn^*(d\g)\right| \leq \normf \varepsilon.
\end{equation*}
Let us detail the term 
\begin{align*}
& \int \fDK(\g) \indK \Pbarn^*(d\g) \\
& = \frac{1}{\leb(\Ln)} \int_{\Ln^*} \int \fDK(\tu(\g))\induD P_n(d\g) du  \\
& = \frac{1}{\leb(\Ln)} \int_{\Ln^*} \iint f(\gDp\tu(\g)_{\Delta^c}) \frac{1}{\ZDK(\tu(\g)_{\Delta^c}^{})} e^{-\HD(\gDp \tu(\g)_{\Delta^c})}  \indpK\pizD(d\gDp) \\ & \hspace*{9cm} \induD P_n(d\g) du. &
\end{align*}
For $u\in\Ln^*$, using  the fact that $\tu(\g)_{\Delta^c} = \tu\big(\guDc\big) $, that $\pizD$ has the same law than $\pizuD \circ \t-u$ and that $\HD(\tu(\g)) =H_{\tau^{-1}_u(\Delta)}(\g)$, we have 
\begin{align*}
 \ZDK(\tu(\g)_{\Delta^c}^{})
& = \int e^{-\HD\big(\gDp\tu\big(\guDc\big)\big)}\bm{1}_{\vert\gDp\vert \leq K} \pizD(d\gDp) \\
& = \int e^{-\HD\big(\tu\big(\guDp\guDc\big)\big)}\bm{1}_{\big|\guDp\big|  \leq K} \pizuD\big(d\guDp\big)\\
& = \int e^{-\HuD\big(\guDp\guDc\big)}\bm{1}_{\big|\guDp\big| \leq K} \pizuD\big(d\guDp\big)\\
& = Z_{\t-u(\Delta), K}\big(\guDc\big).
\end{align*}
Then, by a similar calculation, we find 
\begin{multline*}
\int \fDK(\tu(\g))\induD P_n(d\g)  \\ =
\iint f\big(\tu\big(\guDp\guDc\big)\big) \frac{1}{Z_{\t-u(\Delta), K}\big(\guDc\big)} e^{- \HuD\big(\guDp\guDc\big)} \\ \indpu \pizuD(d\guDp) \indu P_n(d\g).
\end{multline*}
But we can write the measure in finite volume as 
\begin{multline*}
P_n(d\g) = \frac{1}{Z_n} e^{- \HuD\big(\guD\guDc\big)}e^{- H\big(\guDc\big)} \\ \pi^z_{\Ln\setminus\t-u(\Delta)}\big(d\guDc\big)  \pizuD\big(d\guD\big),
\end{multline*}
and integration with respect of the measure $\pizuD$ will give the normalization constant (thanks to the indicator function introduce in step 1). After simplification we have for the translated $P_n \circ \t-u$ with $u\in\Lambda^*_n$ a finite volume DLR equation  
\begin{align*}
&\int \fDK(\tu(\g))\induD P_n(d\g) \\
& = \frac{1}{Z_n} \iint f\big(\tu\big(\guDp\guDc\big)\big) e^{- H\big(\guDp\guDc\big)} \induD \\
 & \hspace*{6.5cm} \pi^z_{\Ln\setminus\t-u(\Delta)}\big(d\guDc\big)
\pizuD\big(d\guDp\big) \\
& = \int f(\tu(\g))\induD P_n(d\g).
\end{align*}
This DLR type equation is then verified for $\Pbarn^*$ by mixing 
\begin{equation*}
\int \fDK(\g)\indK \Pbarn^*(d\g) = \int f(\g) \indK \Pbarn^*(d\g).
\end{equation*}
Since we have the approximation 
\begin{equation*}
\left| \int f(\g) \indK \Pbarn^*(d\g) - \int f(\g) \indK \Pbarn(d\g) \right| \leq \normf \varepsilon,
\end{equation*}
we obtain finally (\ref{approx5}).
\paragraph{Step 6.} We show the last approximation 
\begin{equation}\label{approx6}
\left\vert\int f(\g) \indK \Pbarn(d\g) - \int f(\g) P(d\g) \right\vert \leq 2\normf\varepsilon.
\end{equation}
Using the local convergence of $(P_n)_{n\geq 1}$ to $P$ again, we have, for $n$ large enough
\begin{equation*}
\left\vert\int f(\g) \indK \Pbarn(d\g) - \int f(\g) \indK P(d\g) \right\vert \leq \normf\varepsilon.
\end{equation*}
With our choice of $K$ we have $P(\vert\gD\vert > K) \leq \varepsilon$, we obtain (\ref{approx6}).

\paragraph{Conclusion} Gathering approximations (\ref{approx1}), (\ref{approx2}), (\ref{approx3}), (\ref{approx4}), (\ref{approx5}) and (\ref{approx6}), we have finally 
\begin{equation*}
\left| \int \fD dP - \int f dP \right| \leq 16 \normf\varepsilon.
\end{equation*}
The inequality is true for every $\epsilon > 0$, this ends the proof of Theorem \ref{t.DLR}.

\section*{Acknowledgement}

 This work was supported in part by the Labex CEMPI (ANR-11-LABX-0007-01), the ANR project PPPP (ANR-16-CE40-0016) and by the CNRS GdR 3477 GeoSto.

\bibliographystyle{plain}
\bibliography{biblioV4}
\end{document}